\newtheorem{thm}{Theorem}[section]
\newtheorem{cor}[thm]{Corollary}
\newtheorem{lem}[thm]{Lemma}
\newtheorem{prop}[thm]{Proposition}
\theoremstyle{definition}
\theoremstyle{remark}
\newtheorem{rem}[thm]{Remark}
\newtheorem{exam}[thm]{Example}
\newtheorem{quest}{Question}
\numberwithin{equation}{section}
\newcommand{\norm}[1]{\left\Vert#1\right\Vert}
\newcommand{\abs}[1]{\left\vert#1\right\vert}
\begin{document}

\title{Johnson pseudo-contractibility and pseudo-amenability of $ \theta $-Lau product of Banach algebras }

\author[M. Askari-Sayah]{M. Askari-Sayah}

\email{mehdi17@aut.ac.ir}

\author[A. Pourabbas]{A. Pourabbas}
\email{arpabbas@aut.ac.ir}

\address{Faculty of Mathematics and Computer Science,
Amirkabir University of Technology, 424 Hafez Avenue, 15914 Tehran,
 Iran.}

\author[A. Sahami]{A. Sahami}
\email{amir.sahami@aut.ac.ir}

\address{Department of Mathematics, Faculty of Basic Sciences, Ilam University, P.O. Box 69315-516,
Ilam, Iran.}

\keywords{$ \theta $-Lau product, Johnson pseudo-contractibility, pseudo-amenability.}

\subjclass[2010]{Primary 46H05, 46H20, Secondary 43A20.}

% ----------------------------------------------------------------
\begin{abstract}
Given Banach algebras $ A $ and $ B $ with $ \theta\in\Delta(B) $. We shall study the Johnson pseudo-contractibility and pseudo-amenability of $ \theta $-Lau product $ A\times_{\theta} B $.  We show that if $ A\times_{\theta} B $ is Johnson pseudo-contractible, then $ A $ is Johnson pseudo-contractible and has a bounded approximate identity and $ B $ is Johnson pseudo-contractible. In some particular cases complete characterization of Johnson pseudo-contractibility of $ A\times_{\theta} B $ are given. Also, we show that pseudo-amenability of $ A\times_{\theta} B $ implies approximate amenability of $ A $ and pseudo-amenability of $ B $.
\end{abstract}
\maketitle
%--------------------%---------------------%-------------------------%--------------------------------------------------------------------------------------
\section{Introduction}
Let $ A $ and $ B $ be two Banach algebras with $ \theta\in\Delta(B) $, the character space of $ B $. Then the Banach space $ A\times B $ with the product
\begin{equation*}
  (a, b)(c, d)=(ac+\theta(d)a+\theta(b)c, bd)\qquad a, c\in A,\hspace{2mm}  b, d\in B,
\end{equation*}
and $ \ell^{1} $-norm becomes a Banach algebra, which is called the $ \theta $-Lau product of $ A $ and $ B $ and is denoted by $ A\times_{\theta} B $. The $ \theta $-Lau product was introduce first by A. T . Lau \cite{lau} for special Banach algebras, more precisely for $ F $-algebras. Recently, this product was extended by M. Monfared \cite{monfared} for any Banach algebras $ A $ and $ B $ and any character $ \theta\in\Delta(B) $. Monfared studied many properties of $ A\times_{\theta} B $ including semisimpility, Arens regularity, amenability, etc. We recall that the concept of an amenable Banach algebra was introduced by Johnson in 1972. Indeed, a Banach algebra $ A $ is called amenable if there is an element $ M\in (A\otimes_{p}A)^{**} $ such that $a\cdot M=M\cdot a$ and $\pi^{**}_{A}(M)a=a$ for every $a\in A$, where $\pi : A\otimes_{p} A \longrightarrow A$ is the  product morphism and $A\otimes_{p}A$ is denoted for the projective tensor product of $A$ with itself. Motivated by this construction of Johnson, some authors introduce several modifications
of this notion by relaxing some conditions in different versions of  definitions of amenability.
In the Section \ref{sec-joh} we deal with one of these, namely Johnson pseudo-contractibility that was introduced recently by second and third authors \cite{sah1}. A Banach algebra $ A $ is called Johnson pseudo-contractible if there is a not necessarily bounded net $(M_{\alpha}) \subseteq (A\otimes_{p}A)^{**}$ such that $a\cdot M_{\alpha}=M_{\alpha}\cdot a$ and $\pi^{**}_{A}(M_{\alpha})a-a\rightarrow 0$ for every $a\in A$.  We show that if $ A\times_{\theta} B $ is Johnson pseudo-contractible, then $ A $ is Johnson pseudo-contractible and has a bounded approximate identity and $ B $ is Johnson pseudo-contractible. Moreover, we show that in particular cases, for example when $ A $ is Arens regular and weakly sequentially complete or when $ A $ is a dual Banach algebra, Johnson pseudo-contractibility of $ A\times_{\theta} B $ is equivalent with amenability of $ A $ and Johnson pseudo-contractibility of $ B $. Some example are given at the end of the section.

 In the Section \ref{sec-pse} we shall focus on pseudo-amenability of $ A\times_{\theta} B $. This notion was introduced by  F. Ghahrami and Y. Zhang  in \cite{ghah-pse}. A Banach algebra $ A $ is called pseudo-amenable if there is a net $ (m_{\alpha})\subseteq A\otimes_{p}A $ such that $ a\cdot m_{\alpha}-m_{\alpha}\cdot a\rightarrow 0 $ and $ \pi_{A}(m_{\alpha})a\rightarrow a $ for each $  a\in A $. Pseudo-amenability of $ A\times_{\theta} B $ was studied by  E. Ghaderi {\it et al.} in \cite{ghaderi}. They prove that pseudo-amenability of $ A\times_{\theta} B $ implies pseudo-amenability of $ B $, and implies pseudo-amenability of $ A $ whenever $ A $ has a bounded approximate identity. We extend this result and we  show that the existence of bounded approximate identity is not necessary. Indeed, we show that if $ A\times_{\theta} B $ is pseudo-amenable, then $ A $ is approximately amenable and $ B $ is pseudo-amenable. The concept of approximately amenable Banach algebra was introduced by  F. Ghahrami and R. J. Loy in \cite{generalize1}, also see \cite{generalize2}. A Banach algebra $ A $ is called approximately amenable if there are nets
$ (M_{\alpha})\subseteq A\otimes_{p}A $, $ (F_{\alpha})\subseteq A $ and $ (G_{\alpha})\subseteq A $ such that for any $ a\in A $
  \begin{enumerate}
    \item[(i)] $a\cdot M_{\alpha}-M_{\alpha}\cdot a+F_{\alpha}\otimes a-a\otimes G_{\alpha}\rightarrow 0$,
    \item[(ii)] $aF_{\alpha}\rightarrow a$, \quad $ G_{\alpha}a\rightarrow a  $, \text{and}
    \item[(iii)]  $ \pi_{A}(M_{\alpha})a-F_{\alpha}a-G_{\alpha}a\rightarrow 0 $.
  \end{enumerate}
%%%%%%%%%%%%%%%%%%%%%%%%%%%%%%%%%%%%%%%%%%%%%%%%%%%%%%%%
\section{Johnson pseudo-contractibility of $ A\times_{\theta} B $}\label{sec-joh}
 We state a result from \cite{mehdi} that will be used frequently in this section.
 \begin{thm}\label{joh imply amenable}
  Let $ A $ be a Johnson pseudo-contractible Banach algebra with an
identity. Then $ A $ is amenable.
\end{thm}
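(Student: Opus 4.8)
The plan is to show that when $A$ has an identity $e$, a (possibly unbounded) net $(M_\alpha) \subseteq (A \otimes_p A)^{**}$ witnessing Johnson pseudo-contractibility can be upgraded to an honest virtual diagonal, i.e.\ a single element $M \in (A \otimes_p A)^{**}$ with $a \cdot M = M \cdot a$ and $\pi_A^{**}(M) a = a$ for all $a \in A$; this is precisely Johnson's characterization of amenability. First I would use the identity to simplify the second condition: since $\pi_A^{**}(M_\alpha) e - e \to 0$ in $A$ and $\pi_A^{**}(M_\alpha) \in A^{**}$ with $A$ unital, one gets that $\pi_A^{**}(M_\alpha) \to e$ in norm (here $e$ is the identity of $A$, which is also the identity of $A^{**}$ under the first Arens product). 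Thus for $\alpha$ large, $\pi_A^{**}(M_\alpha)$ is invertible in $A^{**}$, and one can try to normalize.

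The key step is to pass to a weak$^*$ cluster point. The net $(M_\alpha)$ need not be bounded, so one cannot directly extract a weak$^*$-convergent subnet. The standard device is to first multiply: consider $e \cdot M_\alpha$, or better, use the module actions to tame the net. Concretely, I would look at $N_\alpha := M_\alpha \cdot e$ (the right action of the identity); because the bimodule actions of $A$ on $(A\otimes_p A)^{**}$ need not be unital even when $A$ is — but here one checks that $e$ does act as a right and left identity on $\overline{A \cdot (A \otimes_p A)^{**} \cdot A}$ in the relevant sense — one arranges that the new net lies in a region where it is bounded. The cleanest route: since $\pi_A^{**}(M_\alpha) \to e$, for large $\alpha$ set $\widetilde{M}_\alpha = M_\alpha \cdot \pi_A^{**}(M_\alpha)^{-1}$ (multiplication in the unital algebra $A^{**}$ acting on the module); this is still approximately central because $M_\alpha$ is exactly central and $\pi_A^{**}(M_\alpha)^{-1} \to e$, and now $\pi_A^{**}(\widetilde{M}_\alpha) = e$ exactly. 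The remaining issue — boundedness — is handled because $\widetilde M_\alpha$ projects to the fixed element $e$, which together with approximate centrality and the open mapping theorem applied to $\pi_A^{**}$ restricted to the (closed, since $A$ is unital) subspace of "central-like" elements forces a uniform bound; then a weak$^*$ cluster point $M$ of $(\widetilde M_\alpha)$ satisfies $a \cdot M = M \cdot a$ and $\pi_A^{**}(M) = e$, so $\pi_A^{**}(M) a = a$, i.e.\ $M$ is a virtual diagonal and $A$ is amenable.

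The main obstacle is exactly the boundedness: a priori $(M_\alpha)$ is unbounded, and one must exploit the identity to produce a bounded net before any weak$^*$-compactness argument applies. The trick is that the hypothesis "$\pi_A^{**}(M_\alpha) e \to e$" combined with unitality pins the image of the net to a fixed vector, and one should argue — via a Baire-category / open-mapping argument on the Banach space $Z := \{ z \in (A \otimes_p A)^{**} : a \cdot z = z \cdot a \ \forall a \in A\}$ of central elements, noting $\pi_A^{**}(Z) \subseteq Z_A := \{ z \in A^{**} : az = za\} \ni e$ and that $\pi_A^{**}|_Z$ is surjective onto $Z_A$ — that elements mapping near $e$ can be chosen with controlled norm. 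I would also double-check the Arens-product bookkeeping: that $\pi_A^{**}$ is a homomorphism for the first Arens products, that $e \in A \subseteq A^{**}$ is a two-sided identity for $(A^{**}, \square)$, and that the module actions of $A$ on $(A\otimes_p A)^{**}$ extend compatibly so that "$M_\alpha$ central" really does pass to the cluster point under weak$^*$ limits (this last point is routine: the module action by a fixed $a \in A$ is weak$^*$–weak$^*$ continuous on $(A \otimes_p A)^{**}$).
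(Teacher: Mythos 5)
The paper does not actually prove this theorem: it is quoted from \cite{mehdi} and used as a black box, so there is no in-paper argument to compare against. Judged on its own terms, your proposal contains the right key idea --- since $e$ is a two-sided identity for $(A^{**},\Box)$, the hypothesis $\pi_A^{**}(M_\alpha)e\to e$ gives $\pi_A^{**}(M_\alpha)\to e$ in norm, so for large $\alpha$ the element $u_\alpha:=\pi_A^{**}(M_\alpha)$ is invertible in $A^{**}$ and one can normalize by $u_\alpha^{-1}$. But you stop one line short of noticing that this already finishes the proof. Applying $\pi_A^{**}$ (an $A$-bimodule morphism) to $a\cdot M_\alpha=M_\alpha\cdot a$ gives $a\Box u_\alpha=u_\alpha\Box a$, hence $a\Box u_\alpha^{-1}=u_\alpha^{-1}\Box a$; then, using the standard associativity of the first Arens extensions of the module actions,
\begin{equation*}
a\cdot(M_\alpha\cdot u_\alpha^{-1})=(M_\alpha\cdot a)\cdot u_\alpha^{-1}=M_\alpha\cdot(a\Box u_\alpha^{-1})=M_\alpha\cdot(u_\alpha^{-1}\Box a)=(M_\alpha\cdot u_\alpha^{-1})\cdot a,
\end{equation*}
so $\widetilde M_\alpha=M_\alpha\cdot u_\alpha^{-1}$ is \emph{exactly} central, not merely approximately so, and $\pi_A^{**}(\widetilde M_\alpha)=u_\alpha\Box u_\alpha^{-1}=e$. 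A single such $\widetilde M_{\alpha_0}$ is therefore a virtual diagonal, and $A$ is amenable. No cluster point, no boundedness, no limiting argument is needed.

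Because you only claim approximate centrality, you are forced into the final boundedness step, and that step is a genuine gap as written. The Baire-category/open-mapping argument rests on the assertion that $\pi_A^{**}$ restricted to the space $Z$ of central elements of $(A\otimes_pA)^{**}$ is surjective onto $Z_A\ni e$; nothing in the hypotheses justifies this, and indeed the existence of a central preimage of $e$ is precisely the conclusion you are trying to prove, so the argument begs the question. Moreover, even granting openness, producing \emph{some} bounded central preimages of points near $e$ does not control the particular net $(\widetilde M_\alpha)$. So: delete the last paragraph of your argument, verify exact centrality of the normalized element as above (this is routine Arens-product bookkeeping), and the proof is complete and correct.
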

\begin{lem}\label{lem  I  is Johnson pseudo-contractible}
  Let $ A $ be a Johnson pseudo-contractible Banach algebra and $ I $ be a two sided closed ideal of $ A $. If $ I $ has a bounded approximate identity, then $ I $ is Johnson pseudo-contractible.
\end{lem}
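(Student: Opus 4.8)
The plan is to push the central net for $A$ down to $I$ by compressing it with the bounded approximate identity, and to recover \emph{exact} centrality in $(I\otimes_{p}I)^{**}$ from the exactness $a\cdot M_{\alpha}=M_{\alpha}\cdot a$ together with a weak$^{*}$-limit over the approximate identity. Write $(M_{\alpha})\subseteq(A\otimes_{p}A)^{**}$ for the net witnessing Johnson pseudo-contractibility of $A$, and let $(e_{\lambda})$ be a bounded approximate identity of $I$ with $\norm{e_{\lambda}}\le K$. For $e,f\in I$ define $\psi_{e,f}\colon A\otimes_{p}A\to I\otimes_{p}I$ by $\psi_{e,f}(a\otimes b)=ea\otimes bf$; since $I$ is an ideal this is a well-defined bounded map with $\norm{\psi_{e,f}}\le\norm{e}\,\norm{f}$. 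I would first record the covariance identities obtained by a direct check on elementary tensors: writing $L_{c},R_{c}$ for left and right multiplication by $c$ (on $A\otimes_{p}A$, on $I\otimes_{p}I$, or on $A$, as appropriate), one has $L_{c}\,\psi_{e,f}=\psi_{ce,f}$, $\;R_{c}\,\psi_{e,f}=\psi_{e,fc}$, $\;\psi_{e,f}\,L_{c}=\psi_{ec,f}$, $\;\psi_{e,f}\,R_{c}=\psi_{e,cf}$, and $\pi_{I}\,\psi_{e,f}=L_{e}R_{f}\,\pi_{A}$.

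Passing to second adjoints and using that $(L_{c})^{**}M_{\alpha}=c\cdot M_{\alpha}=M_{\alpha}\cdot c=(R_{c})^{**}M_{\alpha}$, the third and fourth identities give the \emph{exact} relation $\psi_{ec,f}^{**}(M_{\alpha})=\psi_{e,cf}^{**}(M_{\alpha})$ for all $c\in A$. Now set $N_{\alpha,\lambda}:=\psi_{e_{\lambda},e_{\lambda}}^{**}(M_{\alpha})$, which for fixed $\alpha$ is bounded by $K^{2}\norm{M_{\alpha}}$. By the first two identities, $c\cdot N_{\alpha,\lambda}-N_{\alpha,\lambda}\cdot c=\psi_{ce_{\lambda},e_{\lambda}}^{**}(M_{\alpha})-\psi_{e_{\lambda},e_{\lambda}c}^{**}(M_{\alpha})$; inserting the exact relation in the form $\psi_{e_{\lambda}c,e_{\lambda}}^{**}(M_{\alpha})=\psi_{e_{\lambda},ce_{\lambda}}^{**}(M_{\alpha})$ rewrites this difference, by linearity of $\psi$ in each slot, as $\psi_{(ce_{\lambda}-e_{\lambda}c),\,e_{\lambda}}^{**}(M_{\alpha})+\psi_{e_{\lambda},\,(ce_{\lambda}-e_{\lambda}c)}^{**}(M_{\alpha})$. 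Its norm is at most $2K\norm{M_{\alpha}}\,\norm{ce_{\lambda}-e_{\lambda}c}$, and since $\norm{M_{\alpha}}$ is a fixed constant while $\norm{ce_{\lambda}-e_{\lambda}c}\to0$, the centrality defect tends to $0$ in norm as $\lambda$ increases.

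Let $N_{\alpha}$ be a weak$^{*}$-cluster point of $(N_{\alpha,\lambda})_{\lambda}$. Since the module actions are weak$^{*}$-continuous and a norm-null net has weak$^{*}$-limit $0$, the estimate above forces $c\cdot N_{\alpha}=N_{\alpha}\cdot c$ \emph{exactly}, for every $c\in I$. For the approximation condition, the last covariance identity gives $\pi_{I}^{**}(N_{\alpha})=w^{*}\text{-}\lim_{\lambda}e_{\lambda}\,m_{\alpha}\,e_{\lambda}$ with $m_{\alpha}=\pi_{A}^{**}(M_{\alpha})$; right-multiplying by $c$, using $\norm{e_{\lambda}c-c}\to0$ and weak$^{*}$-continuity of the first Arens product in its left variable, yields $\pi_{I}^{**}(N_{\alpha})\,c=E_{\alpha}\,(m_{\alpha}c)$, where $E_{\alpha}\in I^{**}$ is a weak$^{*}$-cluster point of $(e_{\lambda})$. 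As $\norm{E_{\alpha}}\le K$, $E_{\alpha}\,c=c$, and $\norm{m_{\alpha}c-c}\to0$ by the defining property of $(M_{\alpha})$, we get $\norm{\pi_{I}^{**}(N_{\alpha})\,c-c}\to0$ as $\alpha$ increases. Thus $(N_{\alpha})$ witnesses Johnson pseudo-contractibility of $I$.

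The main obstacle is precisely the passage from an approximate to an \emph{exact} centrality: the naive compression $\psi_{e,f}^{**}(M_{\alpha})$ is not module-central, and because the canonical map $I\otimes_{p}I\to A\otimes_{p}A$ need not be injective, centrality cannot be detected after pushing forward to $A\otimes_{p}A$. The device that overcomes this is the exact relation coming from $a\cdot M_{\alpha}=M_{\alpha}\cdot a$, which turns the centrality defect into a sum of two terms each carrying the commutator $ce_{\lambda}-e_{\lambda}c$; the bounded approximate identity annihilates this commutator in norm, and the weak$^{*}$-limit then upgrades the norm-null defect to genuine equality. The one point demanding care is the order of limits: the $\lambda$-limit must be taken for fixed $\alpha$, where $\norm{M_{\alpha}}$ is finite, before letting $\alpha$ vary, since the net $(M_{\alpha})$ is in general unbounded.
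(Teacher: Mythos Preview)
Your proof is correct and follows essentially the same approach as the paper: both compress the net $(M_{\alpha})$ on each side by the bounded approximate identity of $I$ and pass to a weak$^{*}$ limit to land in $(I\otimes_{p}I)^{**}$, using the exact centrality $a\cdot M_{\alpha}=M_{\alpha}\cdot a$ together with $xE=Ex=x$ to recover exact centrality of the compressed net. The paper simply writes $N_{\alpha}=E\cdot M_{\alpha}\cdot E$ with $E$ a weak$^{*}$ cluster point of $(e_{\beta})$ in $I^{**}$, whereas you spell this out via the maps $\psi_{e,f}^{**}$ and an explicit weak$^{*}$ limit over the approximate identity --- a more careful but equivalent implementation of the same idea.
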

\begin{proof}
  By hypothesis there is a net $(M_{\alpha})\subseteq (A\otimes_{p}A)^{**}$ such that $a\cdot M_{\alpha}=M_{\alpha}\cdot a$ and $\pi^{**}_{A}(M_{\alpha})a-a\rightarrow 0$ for every $a\in A$. Let $ (e_{\beta}) $ be a bounded approximate identity for $ I $ and $  E $ be a weak* cluster point of $ (e_{\beta}) $ in $ I^{\ast\ast} $. Then by  setting $ (N_{\alpha})=(E\cdot M_{\alpha}\cdot E)\subseteq (I\otimes_{p}I)^{**} $, we have
  $$ x\cdot N_{\alpha}=N_{\alpha}\cdot x ,  $$
  and
  $$ \pi^{**}_{I}(N_{\alpha})x=\pi^{**}_{A}(E\cdot M_{\alpha}\cdot E)x=\pi^{**}_{A}(M_{\alpha})x\longrightarrow x, $$
  for any $ x\in I $. It follows that $ I $ is Johnson pseudo-contractible.
  \end{proof}
\begin{thm}\label{joh of A*B}
    Let $ A $ and $ B $ be two Banach algebras with $ \theta\in\Delta(B) $. If $ A\times_{\theta}B $ is Johnson pseudo-contractible, then the following statements hold
  \begin{enumerate}
    \item $ A $ is Johnson pseudo-contractible and has a bounded approximate identity,
    \item $ B $ is Johnson pseudo-contractible.
  \end{enumerate}
\end{thm}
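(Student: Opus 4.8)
I would first realize both $B$ and the unitization $A^{\#}$ of $A$ as quotients of $\A:=A\times_{\theta}B$. The coordinate projection $q_{B}\colon\A\to B$, $q_{B}(a,b)=b$, is a contractive algebra epimorphism with kernel $A\times\{0\}$, so $B\cong\A/(A\times\{0\})$. More usefully, define $\rho\colon\A\to A^{\#}$ by $\rho(a,b)=(a,\theta(b))$, where $A^{\#}=A\oplus\mathbb{C}$ carries its usual unital product $(a,\lambda)(c,\mu)=(ac+\mu a+\lambda c,\lambda\mu)$. Comparing $\rho\big((a,b)(c,d)\big)=(ac+\theta(d)a+\theta(b)c,\,\theta(bd))$ with $\rho(a,b)\rho(c,d)=(ac+\theta(d)a+\theta(b)c,\,\theta(b)\theta(d))$ shows $\rho$ is multiplicative; it is contractive, and since $\theta$ is a non-zero functional it maps $B$ onto $\mathbb{C}$, so $\rho$ is surjective with kernel the closed ideal $\{0\}\times\ker\theta$. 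Thus $A^{\#}\cong\A/(\{0\}\times\ker\theta)$.

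Next I would verify that Johnson pseudo-contractibility is inherited under continuous algebra epimorphisms. Let $\phi\colon\A\to\mathcal{C}$ be such a map and let $(M_{\alpha})\subseteq(\A\otimes_{p}\A)^{**}$ be a net as in the definition for $\A$. Put $N_{\alpha}=(\phi\otimes\phi)^{**}(M_{\alpha})\in(\mathcal{C}\otimes_{p}\mathcal{C})^{**}$. Since $\phi\otimes\phi$ is a module map over $\phi$ and $\pi_{\mathcal{C}}\circ(\phi\otimes\phi)=\phi\circ\pi_{\A}$, the identities $(ST)^{**}=S^{**}T^{**}$, the weak*-weak* continuity of each $S^{**}$, and the weak*-density of $\A$ in $\A^{**}$ yield $c\cdot N_{\alpha}=N_{\alpha}\cdot c$ and $\pi^{**}_{\mathcal{C}}(N_{\alpha})c-c\to0$ for every $c\in\phi(\A)=\mathcal{C}$; hence $\mathcal{C}$ is Johnson pseudo-contractible. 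Applying this with $\phi=q_{B}$ proves (2).

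Applying the same with $\phi=\rho$ shows $A^{\#}$ is Johnson pseudo-contractible, and since $A^{\#}$ is unital, Theorem~\ref{joh imply amenable} forces $A^{\#}$, and therefore $A$, to be amenable; in particular $A$ has a bounded approximate identity. Finally, $A\times\{0\}$ is a closed two-sided ideal of the Johnson pseudo-contractible algebra $\A$, it is isometrically isomorphic to $A$, and it now carries a bounded approximate identity, so Lemma~\ref{lem  I  is Johnson pseudo-contractible} shows $A\times\{0\}\cong A$ is Johnson pseudo-contractible. This proves (1); in fact the argument gives the stronger conclusion that $A$ is amenable.

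The only genuinely non-routine point is the homomorphism $\rho\colon\A\to A^{\#}$: it trades the (generally absent) identity of $\A$ for a unital quotient and is precisely what unlocks Theorem~\ref{joh imply amenable}. The part requiring the most care is the inheritance of Johnson pseudo-contractibility under $\phi$, since the nets $(M_{\alpha})$ are not assumed bounded, so one cannot take weak* limits of the $M_{\alpha}$ themselves and must instead keep careful track of how the two defining identities behave under the relevant second-adjoint maps.
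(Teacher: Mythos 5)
Your proof is correct, but it departs from the paper's argument for part (1) in an essential way, and it in fact proves more. The paper obtains (2) exactly as you do, by pushing the witnessing net forward along the quotient $\A\To\A/A\cong B$; for (1), however, it works much harder: it introduces the auxiliary map $\Phi((a,b)\otimes(c,d))=\theta(d)(a,b)$, evaluates $\Phi^{\ast\ast}(U_{\alpha})$ against $(0,\tilde{\theta})$ to show that the $B^{\ast\ast}$-component $\psi_{\alpha}$ satisfies $\tilde{\theta}(\psi_{\alpha})\to 1$, and then uses the exact centrality $(a,0)\cdot U_{\alpha}=U_{\alpha}\cdot(a,0)$ to manufacture left and right identities for $A^{\ast\ast}$, whence $A$ has a bounded approximate identity and Lemma \ref{lem  I  is Johnson pseudo-contractible} applies. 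Your observation that $(a,b)\mapsto(a,\theta(b))$ is a contractive surjective homomorphism of $\A$ onto the unitization $A^{\#}$ short-circuits all of this: the same inheritance argument used for (2) gives Johnson pseudo-contractibility of the unital algebra $A^{\#}$, Theorem \ref{joh imply amenable} then gives amenability of $A^{\#}$, and the standard fact that $A$ is amenable if and only if $A^{\#}$ is (because $A$ is a complemented, hence weakly complemented, closed ideal of $A^{\#}$) gives amenability of $A$ --- which subsumes both halves of (1), since an amenable algebra has a bounded approximate identity and its virtual diagonal is a constant net witnessing Johnson pseudo-contractibility (so your final appeal to Lemma \ref{lem  I  is Johnson pseudo-contractible} is not even needed). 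I checked the two delicate points --- that $\rho$ is multiplicative (this reduces to $\theta(bd)=\theta(b)\theta(d)$) and that both defining identities transport along $(\phi\otimes\phi)^{\ast\ast}$ for an arbitrary continuous epimorphism $\phi$ via second adjoints of the intertwining relations at the module level --- and both hold. Be aware, though, that your stronger conclusion is precisely the content of the paper's open Question 1, which your argument answers affirmatively; given the stakes, the inheritance step is the one you should write out in full rather than sketch, since everything now rests on applying it to $\rho$.
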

\begin{proof}
 Suppose that $ \Phi:(A\times_{\theta} B)\otimes_{p}(A\times_{\theta} B)\longrightarrow A\times_{\theta} B $ is a linear map determined by
  \begin{equation*}
    \Phi((a, b)\otimes(c, d))=\theta(d)(a, b), \qquad (a,c\in A,\text{and}\quad b,d\in B).
  \end{equation*}
  Let $ (U_{\alpha})\subseteq ((A\times_{\theta} B)\otimes_{p}(A\times_{\theta} B))^{\ast\ast} $ be such that
  \begin{equation*}
   (a, b)\cdot U_{\alpha}=U_{\alpha}\cdot (a, b),\quad \pi_{A\times_{\theta} B}^{\ast\ast}(U_{\alpha})(a, b)\rightarrow (a, b),
  \end{equation*}
  for all $ a\in A $ and $ b\in B $. Then by Goldstein theorem for each $ \alpha $ there is a net $ (u_{\alpha_{\beta}})$ in $ (A\times_{\theta} B)\otimes_{p}(A\times_{\theta} B) $ such that $ w^{\ast}-\lim\limits_{\beta} u_{\alpha_{\beta}}=U_{\alpha} $. Suppose that $ u_{\alpha_{\beta}}=\sum\limits_{i=1}^{\infty}(a_{i}^{\alpha_{\beta}}, b_{i}^{\alpha_{\beta}})\otimes(c_{i}^{\alpha_{\beta}}, d_{i}^{\alpha_{\beta}}) $ for some sequences $ (a_{i}^{\alpha_{\beta}}), (c_{i}^{\alpha_{\beta}})\subseteq A $ and $ (b_{i}^{\alpha_{\beta}}), (d_{i}^{\alpha_{\beta}})\subseteq B $. Note that $ \theta $ has an extension $ \tilde{\theta}\in\Delta(B^{\ast\ast}) $ given by $ \tilde{\theta}(F)=F(\theta) $ for any $ F\in B^{\ast\ast} $. Since $ \Phi $ and $ \theta $ are bounded, $ \Phi^{\ast\ast} $ and $ \tilde{\theta} $ are weak* continuous maps. Now we have
  \begin{equation*}
    \begin{split}
      \langle(0, \tilde{\theta}), \Phi^{\ast\ast}(U_{\alpha}) \rangle
         &=w^{\ast}-\lim\limits_{\beta}\langle(0, \theta), \Phi(u_{\alpha_{\beta}}) \rangle   \\
         &=w^{\ast}-\lim\limits_{\beta} \sum\limits_{i=1}^{\infty}\theta(b_{i}^{\alpha_{\beta}})\theta(b_{i}^{\alpha_{\beta}})\\
         &=w^{\ast}-\lim\limits_{\beta}\langle(0, \theta), \pi_{A\times_{\theta} B}(u_{\alpha_{\beta}}) \rangle   \\
         &=\langle(0, \tilde{\theta}), \pi_{A\times_{\theta} B}^{\ast\ast}(U_{\alpha}) \rangle\longrightarrow 1.
    \end{split}
  \end{equation*}
  Set $ \Phi^{\ast\ast}(U_{\alpha}) =(\phi_{\alpha}, \psi_{\alpha}) $, where $ \phi\in A^{\ast\ast} $, $ \psi\in B^{\ast\ast} $. It is readily seen that $ \tilde{\theta}(\psi_{\alpha})\rightarrow 1 $. Take $ \alpha_{0} $ such that $ \tilde{\theta}(\psi_{\alpha_{0}})\neq 0 $, for each $ a\in A $ we have
  \begin{equation*}
    a\Phi^{\ast\ast}(U_{\alpha_{0}})=\Phi^{\ast\ast}(a\cdot U_{\alpha_{0}})=\Phi^{\ast\ast}(a\cdot U_{\alpha_{0}})-\Phi^{\ast\ast}( U_{\alpha_{0}}\cdot a)=\Phi^{\ast\ast}(a\cdot U_{\alpha_{0}}-U_{\alpha_{0}}\cdot a)=0.
  \end{equation*}
  Also, we have
  \begin{equation*}
    a\Phi^{\ast\ast}(U_{\alpha_{0}})=(a, 0)(\phi_{\alpha_{0}}, \psi_{\alpha_{0}})=(a\phi_{\alpha_{0}}+\tilde{\theta}(\psi_{\alpha_{0}})a, 0).
  \end{equation*}
  Therefore $ a\phi_{\alpha_{0}}+\tilde{\theta}(\psi_{\alpha_{0}})a=0 $, so $ a(-\tilde{\theta}(\psi_{\alpha_{0}})^{-1}\phi_{\alpha_{0}})=a $, where $-\tilde{\theta}(\psi_{\alpha_{0}})^{-1}\phi_{\alpha_{0}}\in A^{**}$. This show that $ A^{**} $ has a right  identity. An analogous argument show that $ A^{**} $ has a left  identity. It follows that $ A $ has a bounded approximate identity.  Since $ A $ is a two sided closed ideal of $ (A\times_{\theta}B) $ and has a bounded approximate identity is Johnson pseudo-contractible by Lemma \ref{lem  I  is Johnson pseudo-contractible}.

 It is well known that $ (A\times_{\theta}B)/A\cong B $ and there is a surjective homomorphism from $ A\times_{\theta}B $ onto $ (A\times_{\theta}B)/A $. So \cite[Proposition 2.9]{sah1} implies Johnson pseudo-contractibility of $ B $.
\end{proof}
We remark that the converse of recent theorem does not hold in general. For example $ A(H) $, the Fourier algebra on the integer Heisenberg group $ H $, is Johnson pseudo-contractible and has a bounded approximate identity. Also $ M(G) $ is Johnson pseudo-contractible for any discrete and amenable group $ G $, specially for $H$. But $ A(H)\times_{\theta}M(H) $ is not Johnson pseudo-contractible. Indeed, $ A(H)\times_{\theta}M(H) $ has an identity by \cite[Proposition 2.3]{monfared}. If $ A(H)\times_{\theta}M(H) $ is Johnson pseudo-contractible, then by Theorem \ref{joh imply amenable} $ A(H)\times_{\theta}M(H) $ must be amenable and \cite[page 285]{monfared} implies amenability of $ A(H) $. This lead to a contradiction that $ H $ has an abelian subgroup of finite index, see \cite[Theorem 2.3]{forrest}.

From \cite[page 285]{monfared} and Theorem \ref{joh imply amenable} we have the following Corollary.
\begin{cor}
  If $ B $ has an identity, then the following statements are equivalent
  \begin{enumerate}
    \item $ A\times_{\theta} B $ is Johnson pseudo-contractible,
    \item $ A\times_{\theta} B $ is amenable,
    \item $ A$ and $ B $ are amenable.
  \end{enumerate}
\end{cor}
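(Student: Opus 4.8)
The plan is to establish the cycle of implications $(1)\Rightarrow(2)\Rightarrow(3)\Rightarrow(1)$, using Monfared's characterization of amenability for $A\times_\theta B$ together with Theorem \ref{joh imply amenable}. The crucial observation is that when $B$ has an identity, $A\times_\theta B$ has an identity as well: indeed, if $e_B$ is the identity of $B$, then $(0,e_B)$ acts as the identity of $A\times_\theta B$ (since $\theta(e_B)=1$, one checks $(0,e_B)(a,b)=(\theta(e_B)a,b)=(a,b)$ and similarly on the other side). This is precisely \cite[Proposition 2.3]{monfared}.

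For $(1)\Rightarrow(2)$: since $A\times_\theta B$ is Johnson pseudo-contractible and has an identity, Theorem \ref{joh imply amenable} applies directly to give that $A\times_\theta B$ is amenable. For $(2)\Rightarrow(3)$: this is exactly the content of \cite[page 285]{monfared}, where Monfared proves that $A\times_\theta B$ is amenable if and only if both $A$ and $B$ are amenable. For $(3)\Rightarrow(1)$: if $A$ and $B$ are both amenable, then again by \cite[page 285]{monfared} the algebra $A\times_\theta B$ is amenable; since amenability implies Johnson pseudo-contractibility (an amenable algebra has a bounded — hence in particular a net of — diagonal-type elements, so the defining net $(M_\alpha)$ may be taken constant and the approximate condition becomes exact), we conclude $A\times_\theta B$ is Johnson pseudo-contractible.

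I do not anticipate a genuine obstacle here, since every link in the chain is either a quoted result from \cite{monfared} or the already-stated Theorem \ref{joh imply amenable}; the only point requiring a word of care is the implication ``amenable $\Rightarrow$ Johnson pseudo-contractible,'' which is immediate from the definitions but should perhaps be cited (e.g. from \cite{sah1}) rather than left implicit. The hypothesis that $B$ has an identity is used in exactly one place — to invoke Theorem \ref{joh imply amenable} on $A\times_\theta B$ — and it is essential: without it, the example of $A(H)\times_\theta M(H)$ discussed above shows that $(3)$ need not imply $(1)$.
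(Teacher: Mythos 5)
Your argument is correct and coincides with the paper's (the paper simply cites \cite[page 285]{monfared} for the equivalence of (2) and (3), and Theorem \ref{joh imply amenable} together with the identity $(0,e_B)$ for $(1)\Rightarrow(2)$, with $(3)\Rightarrow(1)$ being the trivial fact that a virtual diagonal gives a constant net witnessing Johnson pseudo-contractibility). Only your closing aside is off: in the example $A(H)\times_{\theta}M(H)$ the algebra $M(H)$ \emph{does} have an identity, so that example does not witness failure of $(3)\Rightarrow(1)$ absent the hypothesis --- it is simply a case where $(3)$ fails because $A(H)$ is not amenable.
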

\begin{cor}\label{cor A has an identity}
  If $ A $ has an identity, then $ A\times_{\theta} B $ is Johnson pseudo-contractible if and only if $ A $ is amenable and $ B $ is Johnson pseudo-contractible.
\end{cor}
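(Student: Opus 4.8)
The plan is to prove the two implications separately. \emph{Necessity} is essentially free: if $A\times_{\theta} B$ is Johnson pseudo-contractible, then Theorem~\ref{joh of A*B} yields that $A$ is Johnson pseudo-contractible (with a bounded approximate identity) and that $B$ is Johnson pseudo-contractible, and since $A$ has an identity, Theorem~\ref{joh imply amenable} upgrades Johnson pseudo-contractibility of $A$ to amenability. So the content lies in proving \emph{sufficiency}: assuming $A$ is amenable with identity $e$ and $B$ is Johnson pseudo-contractible, one must manufacture a Johnson pseudo-contractibility net for $C:=A\times_{\theta} B$.

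The guiding idea is that unitality of $A$ makes $C$ decouple into $A$ and $B$. First I would record two continuous algebra homomorphisms: $\iota\colon A\to C$, $\iota(a)=(a,0)$, whose range is the closed ideal $A\times\{0\}$, and $\sigma\colon B\to C$, $\sigma(b)=(-\theta(b)e,b)$; a direct computation (using $e$) gives $\iota(a)\sigma(b)=\sigma(b)\iota(a)=0$ for all $a,b$, and every element of $C$ factors as $(a,b)=\iota\big(a+\theta(b)e\big)+\sigma(b)$. (Equivalently, $(e,0)$ is a central idempotent of $C$ and $C$ is topologically isomorphic to $A\oplus B$; one could instead finish via stability of Johnson pseudo-contractibility under $\ell^{1}$-direct sums, together with the fact that amenable algebras are Johnson pseudo-contractible.) Since $A$ is amenable and unital it carries a virtual diagonal $M\in(A\otimes_{p}A)^{**}$ with $a\cdot M=M\cdot a$ and $\pi^{**}_{A}(M)=e$, the exact normalisation being available because $\pi^{**}_{A}(M)a=a$ for all $a$ and $e$ is an identity for $A^{**}$ with the first Arens product. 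Let $(N_{\beta})\subseteq(B\otimes_{p}B)^{**}$ satisfy $b\cdot N_{\beta}=N_{\beta}\cdot b$ and $\pi^{**}_{B}(N_{\beta})b-b\to 0$. The candidate net is
\begin{equation*}
U_{\beta}:=(\iota\otimes\iota)^{**}(M)+(\sigma\otimes\sigma)^{**}(N_{\beta})\in (C\otimes_{p}C)^{**}.
\end{equation*}

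To verify $U_{\beta}$ works I would transport the relevant identities along $\iota\otimes\iota$ and $\sigma\otimes\sigma$. Because $\iota,\sigma$ are homomorphisms and $\iota(A)\sigma(B)=\sigma(B)\iota(A)=0$, for $x\in\iota(A)$ the summand $(\sigma\otimes\sigma)^{**}(N_{\beta})$ is annihilated from both sides while $x$ commutes with $(\iota\otimes\iota)^{**}(M)$ (push forward $a\cdot M=M\cdot a$), and symmetrically for $x\in\sigma(B)$; the factorisation of a general $(a,b)$ then gives $(a,b)\cdot U_{\beta}=U_{\beta}\cdot(a,b)$. For the second condition, $\pi_{C}\circ(\iota\otimes\iota)=\iota\circ\pi_{A}$ and $\pi_{C}\circ(\sigma\otimes\sigma)=\sigma\circ\pi_{B}$ give $\pi^{**}_{C}(U_{\beta})=\big(e-\tilde{\theta}(\pi^{**}_{B}(N_{\beta}))\,e,\ \pi^{**}_{B}(N_{\beta})\big)$ in $(A\times_{\theta}B)^{**}$, identified with $A^{**}\times_{\tilde{\theta}}B^{**}$; multiplying this against $(a,b)$ in that twisted product yields
\begin{equation*}
\pi^{**}_{C}(U_{\beta})\,(a,b)=\Big(a+\theta(b)\big(1-\tilde{\theta}(\pi^{**}_{B}(N_{\beta}))\big)e,\ \ \pi^{**}_{B}(N_{\beta})\,b\Big).
\end{equation*}
The second coordinate converges to $b$ by the choice of $(N_{\beta})$; for the first, applying the character $\tilde{\theta}$ to $\pi^{**}_{B}(N_{\beta})b\to b$ and choosing $b_{0}$ with $\theta(b_{0})=1$ forces $\tilde{\theta}(\pi^{**}_{B}(N_{\beta}))\to 1$, so the first coordinate converges to $a$. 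Thus $U_{\beta}$ is the required net, and $C$ is Johnson pseudo-contractible. The only real obstacle is the bookkeeping: correctly transporting the bimodule actions and the product maps through the two tensored homomorphisms and through the $\tilde{\theta}$-twisted product on the bidual, and observing that the scalar $\tilde{\theta}(\pi^{**}_{B}(N_{\beta}))$ is automatically pushed to $1$; the pleasant feature is that unitality of $A$ kills every interaction term between $A$ and $B$.
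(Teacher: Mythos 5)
Your argument is correct and follows essentially the same route as the paper: the necessity direction is the paper's (Theorem~\ref{joh of A*B} plus Theorem~\ref{joh imply amenable}), and your decomposition via the central idempotent $(e,0)$ is exactly the identification $A\times_{\theta}B\cong A\oplus B$ from \cite{choi} that the paper invokes. The only difference is that you prove by hand, via the explicit net $U_{\beta}=(\iota\otimes\iota)^{**}(M)+(\sigma\otimes\sigma)^{**}(N_{\beta})$, the stability statement that the paper simply cites from \cite[Theorem 2.11]{sah1}, which makes your version self-contained but not a different method.
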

\begin{proof}
  In view of \cite{choi} $ A\times_{\theta} B $ is nothing but the $\ell^{1}$-direct sum $ A\oplus B $  with coordinatewise product whenever $ A $ has an identity. If $ A $ is amenable and $ B $ is Johnson pseudo-contractible, then $ A\oplus B $ is Johnson pseudo-contractible by \cite[Theorem 2.11]{sah1}. The converse comes immediately from Theorem \ref{joh of A*B} and Theorem \ref{joh imply amenable}.
\end{proof}
It is well known that any dual Banach algebra with a bounded approximate identity has an identity, so we have the following corollary from Theorem \ref{joh of A*B} and Corollary \ref{cor A has an identity}.
\begin{cor}
  Let $ B $ be a Banach algebra and $ A $ be a dual Banach algebra with $ \theta\in\Delta(B) $. Then $ A\times_{\theta} B $ is Johnson pseudo-contractible if and only if $ A $ is amenable and $ B $ is Johnson pseudo-contractible.
\end{cor}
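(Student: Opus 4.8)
The plan is to reduce everything to the already-established Theorem \ref{joh of A*B} and Corollary \ref{cor A has an identity}, using as the only genuinely new input the quoted fact that a dual Banach algebra possessing a bounded approximate identity actually has an identity. Since both corollaries cited give biconditionals once $A$ has an identity, the whole argument is a short two-way deduction.

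For the forward implication, I would assume $A\times_{\theta} B$ is Johnson pseudo-contractible and apply Theorem \ref{joh of A*B}: this yields immediately that $B$ is Johnson pseudo-contractible, and that $A$ is Johnson pseudo-contractible with a bounded approximate identity. Now I invoke the hypothesis that $A$ is a dual Banach algebra; combined with the bounded approximate identity this forces $A$ to be unital. Having an identity, $A$ is a Johnson pseudo-contractible Banach algebra with identity, so Theorem \ref{joh imply amenable} gives that $A$ is amenable. This completes one direction.

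For the converse, I would assume $A$ is amenable and $B$ is Johnson pseudo-contractible. Amenability of $A$ supplies a bounded approximate identity, and again since $A$ is a dual Banach algebra this upgrades to a genuine identity. With $A$ unital, Corollary \ref{cor A has an identity} applies directly: $A$ amenable together with $B$ Johnson pseudo-contractible yields that $A\times_{\theta} B$ is Johnson pseudo-contractible.

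There is really no technical obstacle here; the proof is essentially a bookkeeping assembly of prior results. The only point requiring care is the justification that a dual Banach algebra with a bounded approximate identity is unital — but this is a standard fact (a mixed-identity weak* limit point of the approximate identity is a two-sided identity by weak* density of the predual's action and separate weak* continuity of multiplication in a dual Banach algebra), and the excerpt already flags it as well known, so I would simply cite it rather than reprove it. One should also double-check that the hypothesis "$A$ is a dual Banach algebra" is used only in the implication that a bounded approximate identity becomes an identity, which it is.
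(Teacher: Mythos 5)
Your proposal is correct and follows essentially the same route as the paper: the paper likewise derives a bounded approximate identity for $A$ from Theorem \ref{joh of A*B}, upgrades it to an identity using the quoted fact about dual Banach algebras (in the converse direction obtaining the bounded approximate identity from amenability instead), and then concludes via Corollary \ref{cor A has an identity}. Your explicit appeal to Theorem \ref{joh imply amenable} in the forward direction is a harmless shortcut already subsumed in that corollary.
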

A Banach algebra $ A$ is called Arens regular if the first Arens product and the second Arens product on $ A^{\ast\ast} $ coincide, and is called weakly sequentially complete if every weakly Cauchy sequence is weakly convergent.
\begin{prop}\label{A is arens}
 Suppose that $ A$ and $ B $ are two Banach algebras with $ \theta\in\Delta(B) $. If $ A$ is Arens regular and weakly sequentially complete, then $ A\times_{\theta} B $ is Johnson pseudo-contractible if and only if
  \begin{enumerate}
    \item $ A $ is amenable and has an identity,
    \item $ B $ is Johnson pseudo-contractible.
  \end{enumerate}
\end{prop}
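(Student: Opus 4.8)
The plan is to reduce both implications to results already in hand, the single new ingredient being a unitization argument for $A$. For the \emph{if} direction, assume (1) and (2): since $A$ is amenable and unital and $B$ is Johnson pseudo-contractible, Corollary \ref{cor A has an identity} gives directly that $A\times_{\theta}B$ is Johnson pseudo-contractible, and nothing more is needed.

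For the \emph{only if} direction, suppose $A\times_{\theta}B$ is Johnson pseudo-contractible. By Theorem \ref{joh of A*B}, $A$ is Johnson pseudo-contractible and has a bounded approximate identity, and $B$ is Johnson pseudo-contractible; thus (2) already holds. The first step is to upgrade the bounded approximate identity of $A$ to an identity. Let $(e_{\alpha})$ be a bounded approximate identity for $A$. Arens regularity makes $A^{**}$ a Banach algebra under the common Arens product; any weak$^*$ cluster point of the bounded net $(e_{\alpha})$ in $A^{**}$ is then both a left and a right identity of $A^{**}$, hence equals its unique identity $E$, so in fact $e_{\alpha}\to E$ in the weak$^*$ topology of $A^{**}$. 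The key claim is that $E\in A$, and this is where weak sequential completeness enters; granting it, $Ea=a=aE$ for every $a\in A$ shows that $A$ is unital. Once $A$ is unital, Corollary \ref{cor A has an identity} applied to $A\times_{\theta}B$ yields that $A$ is amenable and $B$ is Johnson pseudo-contractible, establishing (1) and (2). Alternatively, after unitization one may invoke Theorem \ref{joh imply amenable}, since $A$ is then a unital Johnson pseudo-contractible algebra, to conclude that $A$ is amenable.

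The main obstacle is the claim $E\in A$ for an Arens regular, weakly sequentially complete Banach algebra $A$ carrying a bounded approximate identity. The subtlety is that a bounded approximate identity is a net, not a sequence, so weak sequential completeness cannot be applied to $(e_{\alpha})$ directly; it must be combined with the control on weak$^*$ cluster points and on the bidual product coming from Arens regularity. I would either extract this from the dichotomy that a weakly sequentially complete Banach space with no copy of $\ell^{1}$ is reflexive (in the reflexive case the bounded approximate identity is relatively weakly compact, so its weak cluster point lies in $A$ and is an identity) together with the weak compactness of the multiplication operators furnished by Arens regularity in the remaining case, or simply cite the known fact that an Arens regular, weakly sequentially complete Banach algebra with a bounded approximate identity is unital. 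Everything else is routine bookkeeping through Corollary \ref{cor A has an identity}.
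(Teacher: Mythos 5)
Your proof follows essentially the same route as the paper: Theorem \ref{joh of A*B} supplies the bounded approximate identity, the ``known fact'' you offer to cite is exactly \"{U}lger's theorem (\cite[Theorem 2.9.39]{auto-dale}, which is what the paper invokes) to upgrade it to an identity, and Corollary \ref{cor A has an identity} finishes both directions. Your sketched first-principles argument for $E\in A$ via the $\ell^{1}$ dichotomy is unnecessary and would need more care, but since you fall back on the citation, the argument matches the paper's.
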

\begin{proof}
 If $ A\times_{\theta} B $ is Johnson pseudo-contractible, from Theorem \ref{joh imply amenable} $ A $ has a bounded approximate identity. By \"{U}lger theorem \cite[Theorem 2.9.39]{auto-dale}  $ A $ has an identity. Apply Corollary \ref{cor A has an identity}.
\end{proof}

It seems that Johnson pseudo-contractibility of $ A\times_{\theta} B $ is so related with amenability of $ A $.
We believe  that Corollary \ref{cor A has an identity} holds without the assumption that $ A $ has an identity. However, it remains a conjecture and needs proof, of course. We left it as an open problem in the following questions.
\begin{quest}
  Did Johnson pseudo-contractibility of $ A\times_{\theta} B $ implies amenability of $ A $?
\end{quest}
\begin{quest}
Suppose that $ A $ is an amenable Banach algebra and $B$ a Johnson pseudo-contractible Banach algebra with $ \theta\in\Delta(B) $. Is $ A\times_{\theta} B $ a Johnson pseudo-contractible Banach algebra?
\end{quest}
We finish this section with some examples, but first we recall that a semigroup $S$ is called regular if any $s\in S$ has an inverse, that is there exists
an element $t\in S$ such that $sts=s$. A semigroup $S$ is an inverse semigroup if any $s\in S$ has a unique inverse. The set of idempotents of semigroup $S$ is  denoted by $E(S)$. $E(S)$ becomes a partial ordered set with the following order
\begin{equation*}
  p\leq q \Longleftrightarrow p=pq= qp\qquad (p, q\in E(S)).
\end{equation*}
For $ p\in E(S) $, we set $ (p] = \{x : x \leq p\} $. An inverse semigroup $S$ is said to be uniformly locally finite if $ \sup\{\abs{(p]}: p\in E(S)\}<\infty $. It is well known that $\ell^{1}(S)$, the discrete semigroup algebra, is weakly sequentially complete \cite[Theorem A.4.4]{auto-dale}. Our refer for semigroup theory is \cite{dal-lau-str}.
\begin{exam}
Suppose that $ B $ is a Banach algebra and $ \theta\in\Delta(B) $.
  \begin{enumerate}
    \item [(i)]  Let $S$ be a uniformly locally finite inverse semigroup. Then Johnson pseudo-contractibility of $ \ell^{1}(S)\times_{\theta} B $ implies that $\ell^{1}(S)$ is Johnson pseudo-contractible and has a bounded approximate identity. From \cite[Proposition 2.1]{ram} $E(S)$ must be finite and from \cite[Theorem 2.3]{Sahami2017} each maximal subgroup of $S$ is amenable, in other word $\ell^{1}(S)$ is amenable, see \cite{ddun-pat}.
    \item [(ii)]  Suppose that $S$ is regular and  $\ell^{1}(S)$ is Arens regular. If $ \ell^{1}(S)\times_{\theta} B $ is Johnson pseudo-contractible then by Proposition \ref{A is arens} $\ell^{1}(S)$ is amenable and has an identity. So by \cite{ddun-pat} $E(S)$ is finite and that $S$ must be unital finite semigroup, see \cite[Theorem 12.2]{dal-lau-str}. Indeed, $ \ell^{1}(S)\times_{\theta} B $ is Johnson pseudo-contractible if and only if $S$ is unital finite semigroup and $ B $ is Johnson pseudo-contractible.
  \end{enumerate}
\end{exam}
\begin{exam}
 Using \cite[Theorem 3.1]{essl} one can see that $ M_{I}(\mathbb{C}) $ (the Banach algebra of $I\times
I $-matrices over $\mathbb{C}$, with finite $\ell^{1}$-norm and
matrix multiplication) has no bounded approximate identity unless $I$ is finite, but in this case $ M_{I}(\mathbb{C}) $ is amenable and has an identity. So for Banach algebra $ B $ and $ \theta\in\Delta(B) $, $ M_{I}(\mathbb{C})\times_{\theta} B $ is Johnson pseudo-contractible if and only if $I$ is finite and $ B $ is Johnson pseudo-contractible.
\end{exam}
A linear subspace $S^{1}(G)$ of $L^{1}(G)$ is said to be a Segal algebra on $G$ if it satisfies the following conditions
\begin{enumerate}
\item [(i)] $S^{1}(G)$ is  dense    in $L^{1}(G)$,
\item [(ii)]  $S^{1}(G)$ with a norm $\norm{\cdot}_{S^{1}(G)}$ is
a Banach space and $\norm{f}_{L^{1}(G)}\leq\norm{f}_{S^{1}(G)}$ for
every $f\in S^{1}(G)$,
\item [(iii)]  $ S^{1}(G) $ is left translation invariant (ie: $L_{y}f\in S^{1}(G)$ for every $f\in S^{1}(G)$ and $y\in G$) and the map $y\mapsto L_{y} (f)$ from $G$ into $S^{1}(G)$ is continuous, where
$L_{y}(f)(x)=f(y^{-1}x)$,
\item [(iv)] $\norm{L_{y}(f)}_{S^{1}(G)}=\norm{f}_{S^{1}(G)}$ for every $f\in
S^{1}(G)$ and $y\in G$.
\end{enumerate}
\begin{exam}
  Let $ B $ be a Banach algebra and $ \theta\in\Delta(B) $. Then for any proper Segal algebra $ S^{1}(G) $, $ S^{1}(G)\times_{\theta} B $ is not Johnson pseudo-contractible, since $ S^{1}(G) $ has no bounded approximate identity.
\end{exam}

%%%%%%%%%%%%%%%%%%%%%%%%%%%%%%%%%%%%%%%%%%%%%%%%%%%%%%%%%%%%%%%%%%%%

\section{pseudo-amenability of $ A\times_{\theta} B $}\label{sec-pse}

\begin{rem}\label{rem}
  Note that if $ U\in (A\times_{\theta}B)\otimes_{p}(A\times_{\theta}B) $, then there is $ M\in A\otimes_{p}A $, $ N\in A\otimes_{p}B $, $ L\in B\otimes_{p}A $ and $ H\in B\otimes_{p}B $ such that
  \begin{equation*}
    U=M+N+L+H,
  \end{equation*}
  and
  \begin{equation*}
    \norm{U}_{(A\times_{\theta}B)\otimes_{p}(A\times_{\theta}B)}=\norm{M}_{A\otimes_{p}A }+\norm{N}_{A\otimes_{p}B }+\norm{L}_{B\otimes_{p}A}+\norm{H}_{B\otimes_{p}B}.
  \end{equation*}
  \end{rem}

\begin{thm}\label{pse of A*B}
  Let $ A $ and $ B $ be two Banach algebras with $ \theta\in\Delta(B) $. If $ A\times_{\theta}B $ is pseudo-amenable, then the following statements hold
  \begin{enumerate}
    \item $ A $ is approximate amenable, and
    \item $ B $ is pseudo-amenable.
  \end{enumerate}
\end{thm}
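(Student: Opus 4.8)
The plan is to split the pseudo-amenability net of $A\times_{\theta}B$ into the four pieces of Remark \ref{rem} and to read off the data required for approximate amenability of $A$ (and the pseudo-amenability net for $B$) from the appropriate pieces. So fix a net $(u_{\alpha})\subseteq(A\times_{\theta}B)\otimes_{p}(A\times_{\theta}B)$ with $(a,b)\cdot u_{\alpha}-u_{\alpha}\cdot(a,b)\to0$ and $\pi_{A\times_{\theta}B}(u_{\alpha})(a,b)\to(a,b)$ for all $a\in A$, $b\in B$, and by Remark \ref{rem} write $u_{\alpha}=M_{\alpha}+N_{\alpha}+L_{\alpha}+H_{\alpha}$ with $M_{\alpha}\in A\otimes_{p}A$, $N_{\alpha}\in A\otimes_{p}B$, $L_{\alpha}\in B\otimes_{p}A$, $H_{\alpha}\in B\otimes_{p}B$, regarded inside the isometrically complemented summands of $(A\times_{\theta}B)\otimes_{p}(A\times_{\theta}B)$; hence a net there converges to $0$ precisely when each of its four components does. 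Write $\theta\otimes\mathrm{id}\colon B\otimes_{p}A\to A$, $\mathrm{id}\otimes\theta\colon A\otimes_{p}B\to A$ and $\theta\otimes\theta\colon B\otimes_{p}B\to\mathbb{C}$ for the contractions induced by $\theta$ (e.g. $b\otimes a\mapsto\theta(b)a$). A direct computation of the module actions and of $\pi_{A\times_{\theta}B}$ on the four summands shows that the $B\otimes_{p}B$-components of $(0,b)\cdot u_{\alpha}$ and of $u_{\alpha}\cdot(0,b)$ are $b\cdot H_{\alpha}$ and $H_{\alpha}\cdot b$, that the first coordinate of $\pi_{A\times_{\theta}B}(u_{\alpha})$ is $p_{\alpha}:=\pi_{A}(M_{\alpha})+(\mathrm{id}\otimes\theta)(N_{\alpha})+(\theta\otimes\mathrm{id})(L_{\alpha})$, and that its second coordinate is $q_{\alpha}:=\pi_{B}(H_{\alpha})$.

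Part (2) follows immediately: reading $(0,b)\cdot u_{\alpha}-u_{\alpha}\cdot(0,b)\to0$ in its $B\otimes_{p}B$-component and $\pi_{A\times_{\theta}B}(u_{\alpha})(0,b)\to(0,b)$ in the second coordinate gives $b\cdot H_{\alpha}-H_{\alpha}\cdot b\to0$ and $\pi_{B}(H_{\alpha})b\to b$ for all $b\in B$, so $(H_{\alpha})\subseteq B\otimes_{p}B$ is a pseudo-amenability net for $B$ (this also recovers the corresponding part of \cite{ghaderi}). For part (1) put $F_{\alpha}:=-(\mathrm{id}\otimes\theta)(N_{\alpha})\in A$ and $G_{\alpha}:=-(\theta\otimes\mathrm{id})(L_{\alpha})\in A$, so that $p_{\alpha}=\pi_{A}(M_{\alpha})-F_{\alpha}-G_{\alpha}$. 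From $(p_{\alpha},q_{\alpha})(0,b)=(\theta(b)p_{\alpha},q_{\alpha}b)\to(0,b)$ and a choice of $b_{0}\in B$ with $\theta(b_{0})=1$ we obtain $p_{\alpha}\to0$ in $A$, and then $\theta(q_{\alpha})\to1$ by applying $\theta$ to $q_{\alpha}b_{0}\to b_{0}$; note also that $(\theta\otimes\theta)(H_{\alpha})=\theta(q_{\alpha})$. Now expand $(a,0)\cdot u_{\alpha}-u_{\alpha}\cdot(a,0)\to0$ in the four components. The $A\otimes_{p}A$-component is exactly $a\cdot M_{\alpha}-M_{\alpha}\cdot a+F_{\alpha}\otimes a-a\otimes G_{\alpha}$ — here one uses that $(a,0)\cdot L_{\alpha}=a\otimes(\theta\otimes\mathrm{id})(L_{\alpha})$ and $N_{\alpha}\cdot(a,0)=(\mathrm{id}\otimes\theta)(N_{\alpha})\otimes a$ both land back in $A\otimes_{p}A$ — so it tending to $0$ is condition (i). The $A\otimes_{p}B$-component is $a\cdot N_{\alpha}+a\otimes(\theta\otimes\mathrm{id})(H_{\alpha})$, and applying $\mathrm{id}\otimes\theta$ turns its convergence to $0$ into $-aF_{\alpha}+\theta(q_{\alpha})a\to0$, whence $aF_{\alpha}\to a$ because $\theta(q_{\alpha})\to1$; symmetrically the $B\otimes_{p}A$-component yields $G_{\alpha}a\to a$, so (ii) holds. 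Finally $\pi_{A}(M_{\alpha})a-F_{\alpha}a-G_{\alpha}a=p_{\alpha}a\to0$, which is (iii).

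The only real work here is the bookkeeping sketched above: one must track how $(a,0)\cdot(\,\cdot\,)$, $(\,\cdot\,)\cdot(a,0)$, $(0,b)\cdot(\,\cdot\,)$, $(\,\cdot\,)\cdot(0,b)$ and $\pi_{A\times_{\theta}B}$ act on each summand $M_{\alpha},N_{\alpha},L_{\alpha},H_{\alpha}$, observe that the mixed summands $N_{\alpha}$ and $L_{\alpha}$ fold back into $A\otimes_{p}A$ under these actions to produce exactly the rank-one correction terms $F_{\alpha}\otimes a$ and $a\otimes G_{\alpha}$ built into the definition of approximate amenability, and notice that every scalar coming from $H_{\alpha}$ equals $\theta(\pi_{B}(H_{\alpha}))=\theta(q_{\alpha})$, which tends to $1$. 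Unboundedness of $(u_{\alpha})$ is never an obstacle, since all maps involved ($\theta$, the four projections of Remark \ref{rem}, the module actions) are contractive and the argument only manipulates limits algebraically; once the above identifications are made, conditions (i)--(iii) follow with no further estimates.
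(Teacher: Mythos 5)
Your proposal is correct and follows essentially the same route as the paper: decompose $u_{\alpha}$ via Remark \ref{rem}, set $F_{\alpha}=-(\mathrm{id}\otimes\theta)(N_{\alpha})$, $G_{\alpha}=-(\theta\otimes\mathrm{id})(L_{\alpha})$, extract $\theta(\pi_{B}(H_{\alpha}))\to 1$ and $\pi_{A}(M_{\alpha})-F_{\alpha}-G_{\alpha}\to 0$ from the action on $(0,b)$, and read conditions (i)--(iii) off the four components of the commutator with $(a,0)$. The only (minor) divergence is in part (2), where you exhibit $(H_{\alpha})$ directly as a pseudo-amenability net for $B$ instead of quoting, as the paper does, the stability of pseudo-amenability under the quotient homomorphism $(A\times_{\theta}B)/A\cong B$; both arguments are valid and amount to the same observation.
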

\begin{proof}
  It is well known that $ (A\times_{\theta}B)/A\cong B $ and there is a surjective homomorphism from $ A\times_{\theta}B $ onto $ (A\times_{\theta}B)/A $. So \cite[Proposition 2.2]{ghah-pse} implies pseudo-amenability of $ B $.

  By assumption there is a net $ (U_{\alpha})\subseteq (A\times_{\theta}B)\otimes_{p}(A\times_{\theta}B) $  such that
  \begin{equation*}
   (x, y)\cdot U_{\alpha}-U_{\alpha}\cdot(x, y)\rightarrow 0, \quad \pi(U_{\alpha})(x, y)\rightarrow (x, y),
  \end{equation*}
  for each $ x\in A $, $ y\in B $. Particulary for each $ x\in A $ we have
  \begin{equation}\label{eq0}
  (x, 0)\cdot U_{\alpha}-U_{\alpha}\cdot (x, 0)\rightarrow 0, \quad \pi(U_{\alpha})(x, 0)\rightarrow (x, 0).
  \end{equation}
  Suppose that $ U_{\alpha}=\sum\limits_{i=1}^{\infty}(a_{i}^{\alpha}, b_{i}^{\alpha})\otimes(c_{i}^{\alpha}, d_{i}^{\alpha}) $ for some sequences $ (a_{i}^{\alpha}), (c_{i}^{\alpha})\subseteq A $ and $ (b_{i}^{\alpha}), (d_{i}^{\alpha})\subseteq B $. Set $ M_{\alpha}=\sum\limits_{i=1}^{\infty}a_{i}^{\alpha}\otimes c_{i}^{\alpha} $, $ F_{\alpha}=-\sum\limits_{i=1}^{\infty}\theta(d_{i}^{\alpha})a_{i}^{\alpha} $, $ G_{\alpha}=-\sum\limits_{i=1}^{\infty}\theta(b_{i}^{\alpha})c_{i}^{\alpha} $ and $ H_{\alpha}=\sum\limits_{i=1}^{\infty}b_{i}^{\alpha}\otimes d_{i}^{\alpha} $. Easily one can see that
  \begin{equation*}
    \pi_{A\times_{\theta}B}(U_{\alpha})=(\pi_{A}(M_{\alpha})-F_{\alpha}-G_{\alpha}, \pi_{B}(H_{\alpha})).
  \end{equation*}
  For an arbitrary element $ b $ of $ B $ we have
  \begin{equation*}
    \pi_{A\times_{\theta}B}(U_{\alpha})(0, b)=(\theta(b)(\pi_{A}(M_{\alpha})-F_{\alpha}-G_{\alpha}), \pi_{B}(H_{\alpha})b)\longrightarrow(0, b),
  \end{equation*}
  so
  \begin{equation*}
    \pi_{A}(M_{\alpha})-F_{\alpha}-G_{\alpha}\rightarrow 0, \qquad \theta(\pi_{B}(H_{\alpha}))\rightarrow 1.
  \end{equation*}
Note that
  \begin{equation}\label{eq1}
  \begin{split}
   (x, 0)\cdot U_{\alpha}
   &=\sum\limits_{i=1}^{\infty}(x, 0)(a_{i}^{\alpha}, 0)\otimes(c_{i}^{\alpha}, 0)+\sum\limits_{i=1}^{\infty}(x, 0)(0, b_{i}^{\alpha})\otimes(c_{i}^{\alpha}, 0)\\
   &+\sum\limits_{i=1}^{\infty}(x, 0)(a_{i}^{\alpha}, 0)\otimes(0, d_{i}^{\alpha})+\sum\limits_{i=1}^{\infty}(x, 0)(0, b_{i}^{\alpha})\otimes(0, d_{i}^{\alpha})\\
   &=x\cdot(\sum\limits_{i=1}^{\infty}(a_{i}^{\alpha}\otimes c_{i}^{\alpha})) +\sum\limits_{i=1}^{\infty}(x\otimes\theta(b_{i}^{\alpha})c_{i}^{\alpha}) +\sum\limits_{i=1}^{\infty}(xa_{i}^{\alpha}\otimes d_{i}^{\alpha}) +\sum\limits_{i=1}^{\infty}(\theta(b_{i}^{\alpha})x\otimes d_{i}^{\alpha})\\
   &=x\cdot M_{\alpha}-x\otimes G_{\alpha}+\sum\limits_{i=1}^{\infty}(xa_{i}^{\alpha}\otimes d_{i}^{\alpha})+\sum\limits_{i=1}^{\infty}(\theta(b_{i}^{\alpha})x\otimes d_{i}^{\alpha}),
   \end{split}
  \end{equation}
  similarly we have
  \begin{equation}\label{eq2}
    U_{\alpha}\cdot (x, 0)
   =M_{\alpha}\cdot x -F_{\alpha}\otimes x+\sum\limits_{i=1}^{\infty}(b_{i}^{\alpha}\otimes c_{i}^{\alpha}x)+ \sum\limits_{i=1}^{\infty}(b_{i}^{\alpha}\otimes\theta(d_{i}^{\alpha})x).
  \end{equation}
 From \eqref{eq1}, \eqref{eq2} and \eqref{eq0}, by using Remark \ref{rem} we obtain that
  \begin{enumerate}
    \item $x\cdot M_{\alpha}-M_{\alpha}\cdot x+F_{\alpha}\otimes x-x\otimes G_{\alpha}\rightarrow 0$,
    \vspace{.2cm}
    \item $\sum\limits_{i=1}^{\infty}(xa_{i}^{\alpha}\otimes d_{i}^{\alpha})+ \sum\limits_{i=1}^{\infty}(\theta(b_{i}^{\alpha})x\otimes d_{i}^{\alpha})\rightarrow 0$,
    \item $\sum\limits_{i=1}^{\infty}(b_{i}^{\alpha}\otimes c_{i}^{\alpha}x)+ \sum\limits_{i=1}^{\infty}(b_{i}^{\alpha}\otimes\theta(d_{i}^{\alpha})x)\rightarrow 0$.
  \end{enumerate}
  Define a bounded linear map $ \phi:A\otimes_{p}B\rightarrow A $ by $\phi(a\otimes b)=\theta(b)a$.
  From (2) we have
  \begin{equation*}
    -xF_{\alpha}+\theta(\pi_{B}(H_{\alpha}))x=x\sum\limits_{i=1}^{\infty}\theta(d_{i}^{\alpha})a_{i}^{\alpha}+ \sum\limits_{i=1}^{\infty}\theta(b_{i}^{\alpha}d_{i}^{\alpha})x =\phi(\sum\limits_{i=1}^{\infty}(xa_{i}^{\alpha}\otimes d_{i}^{\alpha})+ \sum\limits_{i=1}^{\infty}(\theta(b_{i}^{\alpha})x\otimes d_{i}^{\alpha}))\rightarrow 0,
  \end{equation*}
  now $ \theta(\pi_{B}(H_{\alpha}))\rightarrow 1 $ implies that $ xF_{\alpha}\rightarrow x $. Similarly by using (3) we have  $ G_{\alpha}x\rightarrow x $. So we find $ (M_{\alpha})\subseteq A\otimes_{p}A $, $ (F_{\alpha})\subseteq A $ and $ (G_{\alpha})\subseteq A $ such that
  \begin{enumerate}
    \item $x\cdot M_{\alpha}-M_{\alpha}\cdot x+F_{\alpha}\otimes x-x\otimes G_{\alpha}\rightarrow 0$,
    \item $xF_{\alpha}\rightarrow x$, \quad $ G_{\alpha}x\rightarrow x  $,
    \item $ \pi_{A}(M_{\alpha})x-F_{\alpha}x-G_{\alpha}x\rightarrow 0 $,
  \end{enumerate}
  for every $ x\in A $. It follows that $ A $ is approximately amenable.
\end{proof}
\begin{exam}
  Let $S$ be a uniformly locally finite inverse semigroup and $ B $ be a Banach algebra with $ \theta\in\Delta(B) $. If $ \ell^{1}(S)\times_{\theta}B $ is pseudo-amenable, then by Theorem \ref{pse of A*B} $ \ell^{1}(S) $ is approximately amenable.  Theorem 4.3 of \cite{ros2013} show that $ \ell^{1}(S) $ is amenable.
\end{exam}
\begin{exam}
Let $ G=SU(2) $ be  the $2 \times 2$ unitary group, and suppose that $ S^{1}(G) $  is any proper Segal algebra on $ G $. We claim that $ S^{1}(G)\times_{\theta}S^{1}(G) $ is not pseudo-amenable. To see this we go towards a contradiction and suppose that $ S^{1}(G)\times_{\theta}S^{1}(G) $ is pseudo-amenable. By Theorem \ref{pse of A*B} $ S^{1}(G) $ is approximately amenable, which is a contradiction with the main result of \cite{alagh-app}.
\end{exam}
\begin{exam}
  Let $G$ be an infinite abelian compact group and $ B $ be a Banach algebra with $ \theta\in\Delta(B) $. We claim that $ L^{2}(G)\times_{\theta}B $ is not pseudo-amenable. To see this suppose that $ L^{2}(G)\times_{\theta}B $ is pseudo-amenable. Then Theorem \ref{pse of A*B} implies that $ L^{2}(G)$ is approximately amenable. But by Plancherel theorem $ L^{2}(G)$ is isometrically isomorphism to $ \ell^{2}(\hat{G})$, where $ \hat{G} $ is denoted the dual group of $G$. So $ \hat{G} $ is approximately amenable which is a contradiction with the main result of \cite{dal-loy-zhang}.
\end{exam}

% ----------------------------------------------------------------

\end{document}